\def \ze{\varepsilon}
\def \zh{\theta}
\def \zl{\lambda}
\def \zp{\pi}
\def \zr{\rho}
\def \zf{\varphi}
\def \zq{\psi}
\def \zlma{\ell}
\def \zun{\cup}
\def \zpor{\times}
\def \zas{\ast}
\def \zmei{\leq}
\def \zmai{\geq}
\def \zco{\subset}
\def \zpe{\in}
\def \zeq{\equiv}
\def \znoi{\neq}
\def \zpar{\partial}
\def \zinf{\infty}
\def \zfl{\rightarrow}
\def \zbv{\mid}
\def \zdbv{\parallel}
\def \z/{\over}
\newtheorem*{theorem*}{Theorem}
\newtheorem{lemma}{Lemma}
\title[Smooth actions of $Aff^+ (\mathbb R)$ on compact surfaces]{Smooth actions of 
$Aff^+ (\mathbb R)$ on compact surfaces with no fixed point: 
an elementary construction}
\author{F.J.~Turiel}
\address[F.J.~Turiel]{
Geometr{\'\i}a y Topolog{\'\i}a,
Facultad de Ciencias,
Campus de Teatinos, s/n,
29071-M{\'a}laga, Spain}
\email {turiel@uma.es}
\begin{document}

\begin{abstract}
Any compact surface supports a continuous action of the orientation preserving 
affine group of the real line which is fixed point free (Lima and Plante). It is generally admitted that this
action can be taken smooth although it is not easy finding references for it. Here one gives 
a such action.
\end{abstract}

\maketitle

\section{Introduction}

All structures and objects considered are smooth that is real $C^\zinf$, unless another thing is stated.
Manifolds can have nonempty boundary.

A classical theorem by Lima \cite{Li} states that any continuous action of $\mathbb R^n$ on a compact
surface of nonzero Eule-Poincar\'e characteristic possesses fixed points. Later on Plante \cite{Pl}
extended this result to connected nilpotent Lie groups (a short proof in class $C^\zinf$
of these results can be seen in \cite{Tu}), Hirsch and Weinstein \cite{HW} to analytic actions
of connected  super-solvable Lie Groups and finally Hirsch \cite{Hi} to local nilpotent actions.

Besides Lima and Plante proved that  that every compact surface supports a  continuous fixed 
point  free action by the (orientation preserving) affine group $Aff^+ (\mathbb R)$, the first 
non-nilpotent Lie group. It belongs to the folklore that any compact surface 
(with or without boundary) supports a smooth action of $Aff^+ (\mathbb R)$ fixed point free. 
Although this fact can be deduced from other results on surfaces (for instance see \cite{Be}), 
it seems useful providing an elementary construction of such kind of actions. That is the aim 
of this work.

For this purpose, if $M$ is a compact surface it suffices finding two vector fields $X$ and $Y$ 
on it such that:
\begin{enumerate}[label={\rm (\alph{*})}]
\item $[X,Y]=-Y$.
\item $X$ and $Y$ are tangent to the boundary of $M$ if any.
\item There is no common zero of  $X$ and $Y$.
\end{enumerate}

\section{Two lemmas} 

\begin{lemma}    \label{le-1}
For each natural $k>0$ there exist real numbers 
$-1<a_{1}<...<a_{k}<1$ and two smooth functions
$\zf, \zq\colon {\mathbb R}\zfl {\mathbb R}$, where $\mathbb R$ is equipped with the 
variable $t$, such that:

\begin{enumerate}
\item $[X,Y]=-Y$ where $X=\zf {\frac{\zpar} {\zpar t}}$ 
and $Y=\zq {\frac{\zpar} {\zpar t}}\,$.
\item $\zq^{-1} (0)=\{ a_{1},...,a_{k}\}$ and $\zf(t)= -(t-a_{j})$ near each $a_{j}$. 
\item $\zf =t$ outside $(-1,1)$.
\end{enumerate}
\end{lemma}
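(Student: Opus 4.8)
The plan is to turn condition~(1) into a scalar differential equation and solve it piece by piece. With $X=\zf\,\zpar_t$ and $Y=\zq\,\zpar_t$ one has $[X,Y]=(\zf\zq'-\zq\zf')\,\zpar_t$, so~(1) is equivalent to
\[
\zf\zq'-\zq\zf'=-\zq ,
\]
which, on any interval where $\zq\neq0$, reads $(\zf/\zq)'=1/\zq$; thus once $\zq$ is prescribed, $\zf$ is determined on each such interval up to adding a constant multiple of $\zq$. Two elementary solutions are the building blocks: $\zf(t)=t$ with $\zq$ constant (and conversely $\zf=t$ forces $\zq$ locally constant), and $\zf(t)=-(t-a)$ with $\zq(t)=c\,(t-a)^2$ for $c>0$ (and conversely, inserting $\zf=-(t-a)$ forces $\zq=c\,(t-a)^2$). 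Hence the local shapes demanded by~(2) and~(3) are precisely the ones compatible with the equation, and the task reduces to interpolating between these bricks.

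To build the pair, pick any $-1<a_1<\dots<a_k<1$ and a $\zd>0$ so small that the intervals $U_j:=(a_j-\zd,a_j+\zd)$ are pairwise disjoint and contained in $(-1,1)$. Choose a smooth $\zq\colon\mathbb R\zfl\mathbb R$ with $\zq\equiv1$ on $\{|t|\ge1\}$, $\zq(t)=(t-a_j)^2$ on each $U_j$, and $\zq>0$ elsewhere; then $\zq^{-1}(0)=\{a_1,\dots,a_k\}$ already holds. The complement of $\{|t|\ge1\}\cup\bigcup_jU_j$ consists of $k+1$ bounded open intervals, $J_0=(-1,a_1-\zd)$, $J_j=(a_j+\zd,a_{j+1}-\zd)$ for $1\le j\le k-1$, and $J_k=(a_k+\zd,1)$, on each of which $\zq$ is smooth and positive and, importantly, still at our disposal away from its endpoints (where its germ is already pinned, namely to that of $(t-a_i)^2$ or to the constant $1$). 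Now set $\zf(t):=t$ on $\{|t|\ge1\}$ and $\zf(t):=-(t-a_j)$ on each $U_j$, so the equation holds there by the brick computation; and on a neighbourhood of each $\overline{J_j}$ let $\zf$ solve $(\zf/\zq)'=1/\zq$ with the value at the left endpoint inherited from the adjacent brick, e.g.\ for $1\le j\le k-1$,
\[
\zf(t):=\zq(t)\Bigl(-\frac1\zd+\int_{a_j+\zd}^{t}\frac{ds}{\zq(s)}\Bigr),
\]
and similarly on $J_0$ (started from $t=-1$ with value $-1$) and on $J_k$. A one-line computation shows that this formula equals $-(t-a_j)$ throughout the overlap with $U_j$, so the pieces fit together smoothly at that end; the resulting $\zf$ is smooth on all of $\mathbb R$ and satisfies the equation together with~(3), \emph{provided} each $J_j$-piece also fits the brick attached to its \emph{other} endpoint.

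That proviso is a single scalar condition per interval: equating, at the remaining endpoint of $J_j$, the integrated $\zf$ with the value coming from the neighbouring brick, a short computation yields
\[
\int_{J_j}\frac{ds}{\zq(s)}=\frac2\zd\quad(1\le j\le k-1),\qquad \int_{J_0}\frac{ds}{\zq(s)}=1+\frac1\zd=\int_{J_k}\frac{ds}{\zq(s)} .
\]
These are satisfiable because on each $J_j$ the function $\zq$ is still free away from its endpoints — where only the germ of $(t-a_i)^2$ or of $1$ is prescribed, so $1/\zq$ stays bounded and contributes negligibly to the integral — and letting $\zq$ dip low makes $\int_{J_j}ds/\zq$ as large as we wish while keeping $\zq$ large makes it as small as we wish; so any positive value, in particular the required one, can be attained, and independently on each $J_j$. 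Fixing $\zq$ accordingly finishes the construction. The genuinely delicate point is exactly this matching: on an interior interval the equation plus the germ of $\zf$ imposed at one endpoint leaves no slack, yet the germ $-(t-a_{j+1})$ is imposed at the other endpoint as well, so $\zf$ is over-determined there; this is absorbed by the one remaining degree of freedom, the profile of $\zq$ on $J_j$, and the compatibility equation happens to be solvable since $\int_{J_j}ds/\zq$ ranges over all of $(0,\infty)$. Everything else — the existence of a smooth positive $\zq$ with the stated endpoint germs and integrals, the smoothness of $\zf$ at the junctions $a_j\pm\zd$ (supplied by the overlap check), and the brick verifications of the equation — is routine.
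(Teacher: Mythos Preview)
Your proof is correct and takes a genuinely different route from the paper's. You reduce the bracket relation to the scalar ODE $(\zf/\zq)'=1/\zq$, prescribe $\zq$ with the required local models ($\zq=1$ outside $(-1,1)$ and $\zq=(t-a_j)^2$ near each $a_j$), and then integrate to get $\zf$ piecewise, using the remaining freedom in $\zq$ on each connecting interval $J_j$ to meet the single scalar matching condition $\int_{J_j}ds/\zq=2/\zd$ (respectively $1+1/\zd$ at the two outer intervals). The overlap checks and the attainability of those integral values are fine as you wrote them.

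The paper instead avoids solving any ODE. It starts from the ready-made bracket identities $[A,B]=-A$, $[B,C]=-C$ for the trigonometric fields $A=(1+\cos t)\,\zpar_t$, $B=-\sin t\,\zpar_t$, $C=(1-\cos t)\,\zpar_t$, builds a complete vector field $Z$ that agrees with $A$ near the future zeros and with $C$ near the ends, straightens $Z$ to $\zpar_t$ by its flow, and pushes $B$ and $C$ forward; the local normal forms for $\zf$ then drop out of the bracket identities without any integral matching. Your approach is more computational but entirely self-contained; the paper's is more geometric, trading the matching bookkeeping for the trick of transporting a known model via diffeomorphisms. Either way the lemma follows.
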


%%%
\begin{proof} 
On ${\mathbb R}$ consider the vector fields
$A=(1+cost){\frac{\zpar} {\zpar t}}$, $B=-sint{\frac{\zpar} {\zpar t}}$ and 
$C=(1-cost){\frac{\zpar} {\zpar t}}$; then $[A,B]=-A$, $[B,C]=-C$.
Let $f\colon{\mathbb R}\zfl {\mathbb R}$ be a smooth function such that:
\smallskip

\noindent  $f(t)=0$ if $\zbv t\zbv \zmai 4k\zp$,
\smallskip

\noindent $f>0$ on $(-2\zp, 2k\zp )$,
\smallskip

\noindent $f=1+cost$ on $(2\zlma \zp -\ze, 2\zlma \zp+\ze )$, $\zlma =0,..., k-1$, for some $\ze>0$,
\smallskip 

\noindent $f=1-cost$ on $(-2\zp, -\zp]\zun [(2k-1)\zp, 2k\zp)$.
\smallskip

The vector field $Z=f{\frac{\zpar} {\zpar t}}$ is complete and its integral curve, passing through 
$-\zp$ for the time zero, runs from $-2\zp$ to $2k\zp$. So there exists a diffeomorphism 
$F\colon (-2\zp, 2k\zp)\zfl {\mathbb R}$ such that
$F_\zas Z={\frac{\zpar} {\zpar t}}$ and $F(-\zp)=0$. 

Set ${\widetilde B}= F_{*}B$ and ${\widetilde C}= F_{*}C$. Then $\widetilde C$ only vanishes at 
$F(\{ 0,2\zp,...,2(k-1)\zp\})$, ${\widetilde B}=t{\frac{\zpar} {\zpar t}}$
on $(-\zinf, 0]$, ${\widetilde B}=(t-a){\frac{\zpar} {\zpar t}}$
on $[a, \zinf)$ where $a=F((2k-1)\zp)$ and 
${\widetilde B}=-(t-F(2\zlma\zp)){\frac{\zpar} {\zpar t}}$
near each $F(2\zlma\zp)$, $\zlma =0,...,k-1$. 

Indeed, as $Z$ equals $C$ on $(-2\zp, -\zp]\zun [(2k-1)\zp, 2k\zp)$ then 
$[{\frac{\zpar} {\zpar t}},\widetilde B ]={\frac{\zpar} {\zpar t}}$ on the 
image under $F$ of this set, which implies $\widetilde B =t{\frac{\zpar} {\zpar t}}$ on
$(-\zinf,0]=F((-2\zp,-\zp])$ since $\widetilde B (0)=F_\ast (B(-\zp))=0$ and
$\widetilde B =(t-a){\frac{\zpar} {\zpar t}}$ on 
$[a,\zinf)=F( [(2k-1)\zp, 2k\zp))$ because $\widetilde B (a)=F_\ast (B((2k-1)\zp))=0$. By a similar
reason as $Z$ equals $A$ near $2\zlma\zp$, $\zlma=0,\dots,k-1$, then
$[{\frac{\zpar} {\zpar t}},\widetilde B ]=-{\frac{\zpar} {\zpar t}}$ and
$\widetilde B =-(t-F(2\zlma\zp)){\frac{\zpar} {\zpar t}}$ close to $F(2\zlma\zp)$,
$\zlma=0,\dots,k-1$.

Finally, consider a diffeomorphism $G\colon{\mathbb R}\zfl {\mathbb R}$ 
equals to $Id$ on $(-\zinf, 0]$, to 
$(t-a)$ on $[a+1, \zinf)$ and to a translation near each $F(2\zlma\zp)$, $\zlma =0,...,k-1$,
and set $X=G_{*}{\widetilde B}$, $Y=G_{*}{\widetilde C}$
and $a_{j}=G(F(2(j-1)))$, $j=1,...,k$.
\end{proof}
%%%

Endow $\mathbb R\zpor S^1$ with coordinates $(t,\zh)$.
 %%%

\begin{lemma}    \label{le-2}
Consider a scalar $c\znoi 0$ and a vector field $Y$ on $(-\ze, 0)\zpor S^{1}$ such that $[X,Y]=-Y$
where $X=t^{2}e^{1/t}{\frac{\zpar} {\zpar t}}$. Then there exists a 
diffeomorphism $G\colon (-\ze,0)\zpor S^{1}\zfl (-\ze,0)\zpor S^{1}$ and two vector fields $\widetilde X$,
$\widetilde Y$ on $(-\ze,\zinf)\zpor S^{1}$ such that $G_{*}X={\widetilde X}$, 
$G_{*}Y={\widetilde Y}$ on
$(-\ze,0)\zpor S^{1}$, and ${\widetilde X}=c{\frac{\zpar} {\zpar\zh}}$,
${\widetilde Y}=0$ on $[0, \zinf)\zpor S^{1}$ 
(which implies $[\widetilde X ,\widetilde Y ]= -\widetilde Y$ everywhere).

Moreover $G$ preserves the orientation.
\end{lemma}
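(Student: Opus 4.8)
The plan is to take $G$ to be a \emph{shear} of $(-\varepsilon,0)\times S^{1}$ along the circle factor and then to extend the pushed‑forward fields by $c\,\partial/\partial\theta$, resp.\ by $0$, across $\{t=0\}$. The relevant remark is that $c\,e^{-1/t}$ has constant $X$-derivative: from $\frac{d}{dt}e^{-1/t}=t^{-2}e^{-1/t}$ one gets $X(c\,e^{-1/t})=t^{2}e^{1/t}\cdot c\,t^{-2}e^{-1/t}=c$ (so $e^{-1/t}$ is a time parameter for $X$ on $(-\varepsilon,0)$, and the shear below ``rolls up'' the orbits of $X$ with angular speed $c$). I would therefore put
\[
G(t,\theta)=\bigl(\,t,\ \theta+c\,e^{-1/t}\ (\mathrm{mod}\ 2\pi)\,\bigr),
\]
a diffeomorphism of $(-\varepsilon,0)\times S^{1}$ onto itself (a rotation of the circle for each fixed $t$), orientation preserving since its Jacobian determinant is $\equiv 1$. (Note $G$ itself does not extend continuously across $t=0$, but the fields below do.) A short computation using $X(c\,e^{-1/t})=c$ gives $G_{*}X=t^{2}e^{1/t}\,\frac{\partial}{\partial t}+c\,\frac{\partial}{\partial\theta}$. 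Since $t\mapsto t^{2}e^{1/t}$ is flat at $t=0$, the function $g$ equal to $t^{2}e^{1/t}$ on $(-\varepsilon,0)$ and to $0$ on $[0,\infty)$ is smooth, and $\widetilde X:=g(t)\,\frac{\partial}{\partial t}+c\,\frac{\partial}{\partial\theta}$ is smooth on $(-\varepsilon,\infty)\times S^{1}$, agrees with $G_{*}X$ for $t<0$, and equals $c\,\partial/\partial\theta$ for $t\ge 0$.

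Next I would identify the vector fields $Y$ satisfying the hypothesis. Writing $Y=A\,\frac{\partial}{\partial t}+B\,\frac{\partial}{\partial\theta}$ and $\mu(t)=t^{2}e^{1/t}$ (hence $\mu'=(2t-1)e^{1/t}$), the condition $[X,Y]=-Y$ is equivalent to the two scalar linear ODEs $\mu B_{t}=-B$ and $\mu A_{t}=(\mu'-1)A$ in $t$, with $\theta$ as a parameter. Integrating (using $\int t^{-2}e^{-1/t}\,dt=e^{-1/t}$ and $\int(2t^{-1}-t^{-2})\,dt=2\ln|t|+t^{-1}$) gives
\[
B=q(\theta)\,e^{-e^{-1/t}},\qquad A=p(\theta)\,t^{2}e^{1/t}\,e^{-e^{-1/t}},
\]
with $p,q\in C^{\infty}(S^{1})$; smoothness of the integration ``constants'' $p,q$ is seen by checking that $B\,e^{e^{-1/t}}$ and $A\,t^{-2}e^{-1/t}\,e^{e^{-1/t}}$ are independent of $t$. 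Pushing forward by $G$ (the cross term once more collapsing thanks to $X(c\,e^{-1/t})=c$) yields
\[
G_{*}Y=p\bigl(\theta-c\,e^{-1/t}\bigr)\,t^{2}e^{1/t}\,e^{-e^{-1/t}}\,\frac{\partial}{\partial t}+(cp+q)\bigl(\theta-c\,e^{-1/t}\bigr)\,e^{-e^{-1/t}}\,\frac{\partial}{\partial\theta},
\]
and I set $\widetilde Y$ equal to $G_{*}Y$ on $(-\varepsilon,0)\times S^{1}$ and to $0$ on $[0,\infty)\times S^{1}$.

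The only delicate point — the main obstacle — is to verify that $\widetilde Y$ is smooth, i.e.\ that $G_{*}Y$ and all its partial derivatives tend to $0$ as $t\to 0^{-}$. Each such derivative is a finite sum of products of (i) derivatives of $p$ or $q$ evaluated at $\theta-c\,e^{-1/t}$, which are bounded since $S^{1}$ is compact, (ii) rational functions of $t$ times $e^{-1/t}$ (coming from differentiating $e^{-1/t}$ and $t^{2}e^{1/t}$), and (iii) the common factor $e^{-e^{-1/t}}$. Setting $s=e^{1/|t|}\to+\infty$ as $t\to 0^{-}$, so $1/|t|=\ln s$, such a product is dominated by a polynomial in $s$ and $\ln s$ times $e^{-s}$, hence tends to $0$. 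Therefore $\widetilde Y$ is smooth on $(-\varepsilon,\infty)\times S^{1}$, agrees with $G_{*}Y$ for $t<0$, and vanishes for $t\ge 0$. Finally $[\widetilde X,\widetilde Y]+\widetilde Y$ is a smooth vector field vanishing on the dense open set $\{t\neq 0\}$ (it equals $G_{*}([X,Y]+Y)=0$ for $t<0$ and $[c\,\partial/\partial\theta,0]=0$ for $t>0$), hence it is identically $0$, so $[\widetilde X,\widetilde Y]=-\widetilde Y$ throughout. This yields the lemma, $G$ being orientation preserving as noted above.
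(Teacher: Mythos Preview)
Your proof is correct and follows essentially the same approach as the paper: the same shear $G(t,\theta)=(t,\theta+c\,e^{-1/t})$, the same identification of all $Y$ with $[X,Y]=-Y$ (the paper writes $Y=e^{-e^{-1/t}}f_1(\theta)X+e^{-e^{-1/t}}f_2(\theta)\,\partial_\theta$, which is your formula with $p=f_1$, $q=f_2$), and the same extension of $G_*X$, $G_*Y$ by $c\,\partial_\theta$ and $0$. The only cosmetic difference is in the smoothness check for $\widetilde Y$: rather than estimating each partial derivative directly, the paper factors the extension through the flat substitution $s=e^{1/t}$ (so the coefficients become $e^{-1/s}f_k(\theta-c/s)$, visibly smooth at $s=0$), but the content is the same.
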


%%%
\begin{proof}
Since $(t^{2}e^{1/t} {\frac{\zpar} {\zpar t}})e^{-1/t}=1$ and 
$[X,{\frac{\zpar} {\zpar\zh}}]=0$ one has
$$Y= exp(-e^{-1/t})f_{1}(\zh)X +  exp(-e^{-1/t})f_{2}(\zh){\frac{\zpar} {\zpar\zh}}.$$

 Now consider the diffeomorphism $G\colon (-\ze,0)\zpor S^{1}\zfl (-\ze,0)\zpor S^{1}$ given by
$G(t,\zh )= (t, \zh + \zp(ce^{-1/t}))$, where $\zp\colon\mathbb R \zfl S^{1}\zeq{\mathbb R}/2\zp {\mathbb Z}$
is the canonical projection. Then 
$G_\zas X=t^{2}e^{1/t}{\frac{\zpar} {\zpar t}}+c{\frac{\zpar} {\zpar\zh}}$,
which is smoothly prolongated to a vector field $\widetilde X$ on $(-\ze,\zinf)\zpor S^{1}$
by setting $\widetilde X =c{\frac{\zpar} {\zpar\zh}}$ on $[0,\zinf)\zpor S^1$. On the
other hand
$$G_\zas Y=exp(-e^{-1/t})f_{1}(\zh-\zp(c e^{-1/t})){\widetilde X}  
+exp(-e^{-1/t})f_{2}(\zh-\zp(c e^{-1/t})){\frac{\zpar} {\zpar\zh}}$$
that prolongs by zero to a smooth vector field $\widetilde Y$ on $(-\ze,\zinf)\zpor S^{1}$. 

Indeed, given $g\colon S^{1}\zfl {\mathbb R}$ set 
$g^{(\zlma )}= {\frac{\zpar^\zlma g} {\zpar\zh^\zlma}}$. For every 
$k,\zlma\zpe\mathbb N$ the function $g^{(\zlma )}(\zh -\zp(c/s))$ is dominated on
${\mathbb R}^{+}\zpor S^{1}$ by $s^{-k}e^{-1/s}$ because  $g^{(\zlma )}(\zh -\zp(c/s))$ is
bounded on this set, which implies that the function
\bigskip 

$\zl(s,\zh)=
\begin{cases}
e^{-1/s}g(\zh -\zp(c/s))\quad {\rm if}\quad s>0 \\ 
0 \quad {\rm if}\quad s\zmei 0
\end{cases}$
\bigskip

\noindent is smooth on ${\mathbb R}\zpor S^{1}$.  Therefore (set $s=e^{1/t}$ if $t<0$ and 
$s=0$ if $t\zmai 0$) the functions ${\tilde f}_k$, $k=1,2$, given by
\bigskip

${\tilde f}_k(t,\zh)=
\begin{cases}
exp(-e^{-1/t})f_{k}(\zh -\zp(c e^{-1/t}))\quad {\rm if}\quad t<0 \\
0 \quad {\rm if}\quad t\zmai 0
\end{cases}$
\bigskip

\noindent are smooth on $(-\ze,\zinf)\zpor S^1$.
\end{proof} 

Consider a surface $S$ equipped with two vector fields $X_S ,Y_S$. A {\it neat hole} of 
parameter $c\znoi 0$ (on $S$ relative to $X_S ,Y_S$) is an open set $A$ of $S$ 
identified to the cylinder $(0,\zinf)\zpor S^1$  by means of a diffeomorphism
$F\colon A\zfl (0,\zinf)\zpor S^1$ in such a way that:
\begin{enumerate}[label={\rm (\arabic{*})}]
\item The ''side'' $\{\zinf\}\zpor S^1$ of the cylinder defines an end of $S$.
\item $F^{-1}\colon(0,\zinf)\zpor S^1\zfl A\zco S$ extends into a continuous map
from $[0,\zinf)\zpor S^1$ to $S$.
\item $X_S ,Y_S$ regarded on $(0,\zinf)\zpor S^1$ equal the vector fields 
$\widetilde X ,\widetilde Y$ of Lemma \ref{le-2}, that is 
$F_\ast X_S =c{\frac{\zpar} {\zpar\zh}}$ and $F_\ast Y_S =0$.
\end{enumerate} 
\medskip

{\it Truncating a neat hole} means removing the set $(b,\zinf)\zpor S^1$, for some $b>0$, 
from $A\zeq (0,\zinf)\zpor S^1$, so from $S$. Therefore the neat hole becomes a collar of
a $S^1$-component of the boundary, to which $X_S ,Y_S$ are tangent.

\section{Neat holes on $\mathbb R^2$}

On ${\mathbb R}^{2}$, with coordinates $(x_1 ,x_2 )$, when $n\zmai 1$ one considers 
the vector fields  $X=\zf_{1}(x_{1}){\frac{\zpar} {\zpar x_1}} 
+ \zf_{2}(x_{2}){\frac{\zpar} {\zpar x_2}}$ and
$Y=\zq_{1}(x_{1}){\frac{\zpar} {\zpar x_1}} 
+ \zq_{2}(x_{2}){\frac{\zpar} {\zpar x_2}}$ where
$\zf_{1}$, $\zq_{1}$ are like in Lemma \ref{le-1} for $k=1$, and
$\zf_{2}$, $\zq_{2}$ are like in Lemma \ref{le-1} for $k=n$; 
set $\zq_{1}^{-1}(0)=\{a_1 \}$ and $\zq_{2}^{-1}(0)=\{b_1 ,\dots,b_n \}$.
On the other hand if $n=0$ we consider $X=x_{1}{\frac{\zpar} {\zpar x_1}}
+x_{2}{\frac{\zpar} {\zpar x_2}}$ and $Y={\frac{\zpar} {\zpar x_1}}$.
Obviously $[X,Y]=-Y$ and $X$ is complete because (3) of Lemma 1; moreover the common 
zeros of $X,Y$ are $(a_{1},b_{1}),\dots,(a_{1},b_{n})$ if any. 
\bigskip 

{\it Constructing neat holes at $(a_{1},b_{1}),\dots,(a_{1},b_{n})$.} 

Up to translation it is enough to consider the case where 
$(a_{1},b_{j})=(0,0)$, $X=-x_{1}{\frac{\zpar} {\zpar x_1}}  
-x_{2}{\frac{\zpar} {\zpar x_2}}$ and
$Y=\zq_{1}(x_{1}){\frac{\zpar} {\zpar x_1}} 
+ \zq_{2}(x_{2}){\frac{\zpar} {\zpar x_2}}$ near the origin.

For a suitable $d>0$ identify $(d,\zinf)\zpor S^1$, endowed with coordinates $(r,\zh)$, with
a sufficiently  small punctured ball $B_\zr (0)-\{0\}$ by setting  $x_{1}=r^{-1}cos\zh$, 
$x_{2}=r^{-1}sin\zh$. Then $X=r{\frac{\zpar} {\zpar r}}$.

Let $\zf\colon{\mathbb R}\zfl {\mathbb R}$ be a smooth function such that, for some $\ze >0$:
\bigskip 

$\zf(t)=\begin{cases}
t+1\quad {\rm if} \quad -1-\ze <t<-1+\ze \\
t^{2}e^{1/t}\quad {\rm if} \quad -\ze <t<0 \\
>0 \quad {\rm if} \quad -1<t<0 \\
0\quad {\rm if} \quad t\zmai 0. 
\end{cases}$
\bigskip

Since  $\zf{\frac{\zpar} {\zpar t}}$ on $ (-1,0)$ and 
$r{\frac{\zpar} {\zpar r}}$ on ${\mathbb R}^+$
are complete and never vanish there exists a diffeomorphism 
$f\colon{\mathbb R}^{+}\zfl (-1,0)$ such that $f_\zas \left( r{\frac{\zpar} {\zpar r}}\right)
=\zf{\frac{\zpar} {\zpar t}}$. That gives rise to a diffeomorphism
$F\colon (r,\zh)\zpe {\mathbb R}^{+}\zpor S^{1}\zfl (f(r),\zh )\zpe (-1,0)\zpor S^{1}$ which
transforms $r{\frac{\zpar} {\zpar r}}$ into $\zf{\frac{\zpar} {\zpar t}}$.  

Thus for a suitable $\ze>0$ the punctured ball $B_\zr (0)-\{0\}$ can be identified with 
$(-\ze,0)\zpor S^1$, in such a way that $X=t^{2}e^{1/t}{\frac{\zpar} {\zpar t}}$.
By Lemma \ref{le-2}, attaching $(-\ze,\zinf)\zpor S^1$ to 
$\mathbb R^2 -\{(a_{1},b_{1}),\dots,(a_{1},b_{n})\}$ around $(a_1 ,b_j )$ in the obvious way
(that is by means of the diffeomorphism $G$ given by this result),
provides us with a neat hole of parameter any $c\znoi 0$.
\bigskip

{\it The neat hole at the infinity of ${\mathbb R}^2$.}

As $\zf_{1}$ and $\zf_{2}$ are bounded on $[-1,1]$ then 
$x_{1}\zf_{1}(x_{1})+x_{2}\zf_{2}(x_{2})\zmai 1$ if 
$\zdbv x\zdbv\zmai \zr$ for some $\zr$ big enough (for the case $n=0$ it is obvious). 
In other words $X$ is outwardly 
transverse to any sphere $S_{\zl}^{1}$, $\zl\zmai\zr$,
and runs to infinity. This allows us to identify ${\mathbb R}^{2}-D_{\zr}^{2}$ with 
${\mathbb R}^{+}\zpor S^{1}$, endowed with coordinates
$({\tilde r},\zh)$, by means of the integral curves of $X$ passing through $S_{\zr}^{1}$ 
for the time zero. Now
$X={\frac{\zpar} {\zpar\tilde r}}$.

Finally setting $r=e^{\tilde r}$ we have $X=r{\frac{\zpar} {\zpar r}}$ on 
$(1,\zinf)\zpor S^{1}$ with coordinates 
$(r,\zh)$. The remainder is the same as before.

\section{Construction of the action of $Aff^+ (\mathbb R)$}

The construction above of a neat hole means attaching a cylinder $(-\ze,\zinf)\zpor S^1$; 
however the space remains diffeomorphic to $\mathbb R^2 -\{(a_{1},b_{1}),\dots,(a_{1},b_{n})\}$,
$n\zmai 0$, that is to $S^2$ with $n+1$ holes, but now the vector fields on it are easily
managed. 

On $[0,\zp]\zpor\mathbb R$ endowed with coordinates $(y_1 ,y_2 )$ consider the vector fields
$X'=c{\frac{\zpar} {\zpar y_1}}$, $c\zpe\mathbb R -\{0\}$, and $Y'=0$. Now identifying
each $(0,y_2 )$ with $(\zp,-y_2 )$ gives rise to the open Moebius strip $M$ equipped with two
vector fields $X$ and $Y$ with no common zero such that $[X,Y]=-Y$. Moreover $(M-N,X,Y)$,
where $N$ is given by the condition $y_2 =0$ (before identifying), is (diffeomorphic to) a
neat hole of parameter $c$. 

On the other hand two different neat holes, belonging to the same surface or not, can be gluing 
together under the diffeomorphism 
$(t,\zh)\zpe\mathbb R^+ \zpor S^1 \zfl (t^{-1},h(\zh))\zpe\mathbb R^+ \zpor S^1$ where $h$ belongs
to the orthogonal group $O(2)$. When the parameter of each neat hole may be independently
chosen , it is always possible at the same time, by gluing the vector fields also, to construct two 
vector fields  $\overline X ,\overline Y$ with no common zero such that 
$[\overline X ,\overline Y ]=-\overline Y$.

Finally, considering $S^2$ with $n+1$ neat holes constructed as before for a suitable $n$, 
truncating some neat holes (only if the boundary is nonempty), gluing together or not some 
pairs of neat holes (perhaps some of them reversing the orientation), gluing or not an open disk 
(that is $\mathbb R^2$ with the neat hole at the infinity)  or an open Moebius strip, leads to construct 
two vector fields $\widehat X ,\widehat Y$ on any connected compact surface, with no common 
zero and tangent to the boundary if any, such that   $[\widehat X ,\widehat Y]=-\widehat Y$. 

Therefore an action of $Aff^+ (\mathbb R)$ with no fixed
point can be constructed on any compact surface.

%%%%%%%%%%%%%%%%%%%%%%%%%%%%%%%%%%
% Bibliography
%%%%%%%%%%%%%%%%%%%%%%%%%%%%%%%%%%

\end{document}